\documentclass{amsart}
\usepackage{a4}
\usepackage{graphics}
\usepackage{color}
\usepackage{amssymb}
\usepackage[all]{xy}
\usepackage{amsmath}
\usepackage{stmaryrd}
\CompileMatrices

\newtheorem{theorem}{Theorem}[section]

\newtheorem{proposition}[theorem]{Proposition}

\theoremstyle{definition}

\newtheorem{example}[theorem]{Example}

\newtheorem{remark}[theorem]{Remark}
\theoremstyle{remark}

\numberwithin{equation}{section}
%

%
%
\def\Chi{{\mathbb X}}

\def\div{{\rm div}}

\def\mal{\! \cdot \!}

\def\rq#1{\widehat{#1}}
\def\t#1{\widetilde{#1}}
\def\b#1{\overline{#1}}
\def\bangle#1{\langle #1 \rangle}

\def\KK{{\mathbb K}}
\def\TT{{\mathbb T}}
\def\ZZ{{\mathbb Z}}

\def\QQ{{\mathbb Q}}
\def\PP{{\mathbb P}}

\def\Cl{\operatorname{Cl}}

\def\Spec{{\rm Spec}}

\def\lin{{\rm lin}}

\begin{document}
\title[Factorially graded rings of complexity one]%
{Factorially graded rings of complexity one}
\author[J.~Hausen]{J\"urgen Hausen} 
\address{Mathematisches Institut, Universit\"at T\"ubingen,
Auf der Morgenstelle 10, 72076 T\"ubingen, Germany}
\email{juergen.hausen@uni-tuebingen.de}
\author[E.~Herppich]{Elaine Herppich} 
\address{Mathematisches Institut, Universit\"at T\"ubingen,
Auf der Morgenstelle 10, 72076 T\"ubingen, Germany}
\email{elaine.herppich@uni-tuebingen.de}

\begin{abstract}
We consider finitely generated normal algebras 
over an algebraically closed field of characteristic 
zero that come with a complexity one grading 
by a finitely generated abelian group such 
that the conditions of a UFD are satisfied for 
homogeneous elements.
Our main results describe these algebras 
in terms of generators and relations.
We apply this to write down explicitly 
the possible Cox rings of normal complete rational 
varieties with a complexity one torus action.
\end{abstract}

\subjclass[2000]{13A02, 13F15, 14L30}

\maketitle

\section{Statement of the results}

The subject of this note are finitely generated 
normal algebras $R = \oplus_K R_w$
over some algebraically closed field $\KK$ 
of characteristic zero 
graded by a finitely generated abelian group $K$.
We are interested in the following homogeneous 
version of a unique factorization domain:
$R$ is called {\em factorially (K-)graded\/}
if every homogeneous nonzero nonunit is a 
product of $K$-primes, where a {\em $K$-prime\/}
element is a homogeneous nonzero nonunit $f \in R$ 
with the property that whenever $f$ divides 
a product of homogeneous elements, then 
it divides one of the factors.
For free $K$, the properties
factorial and factorially graded are 
equivalent~\cite{Anders}, but for a $K$ with 
torsion the latter is more general.
Our motivation to study factorially graded algebras
is that the Cox rings of algebraic varieties are
of this type, see for example~\cite{ArDeHaLa}.

We focus on effective $K$-gradings of complexity 
one, i.e., the $w \in K$ with $R_w \ne 0$ 
generate $K$ and $K$ is of rank $\dim(R)-1$.
Moreover, we suppose that the grading is 
pointed in the sense that $R_0 = \KK$ holds.
The case of a free grading group $K$ and hence 
factorial $R$ was treated in~\cite[Section~1]{HaHeSu}.
Here we settle the more general case of factorial 
gradings allowing torsion.
Our results enable us to write down explicitly 
the possible Cox rings of normal complete
rational varieties with a complexity one 
torus action.
This complements~\cite{HaSu}, where the Cox ring 
of a given variety was computed in terms of the 
torus action.

In order to state our results, let us 
fix the notation.
For $r \ge 1$, let 
$A = (a_0, \ldots, a_r)$ 
be a sequence 
of vectors $a_i = (b_i,c_i)$ in $\KK^2$
such that any pair $(a_i,a_k)$ with
$k \ne i$ is linearly independent,
$\mathfrak{n} = (n_0, \ldots, n_r)$ 
a sequence of positive integers
and $L = (l_{ij})$ a family
of positive integers,
where $0 \le i \le r$ and 
$1 \le j \le n_i$.
For every $0 \le i \le r$, define a monomial
$$
T_i^{l_i} 
\  := \
T_{i1}^{l_{i1}} \cdots T_{in_i}^{l_{in_i}}
\ \in \
S
\ := \ 
\KK[T_{ij}; \; 0 \le i \le r, \; 1 \le j \le n_i].
$$
Moreover, for any two indices $0 \le i,j \le r$,
set $\alpha_{ij} :=  \det(a_i,a_j)  =  b_ic_j-b_jc_i$
and for any three indices 
$0 \le i < j < k \le r$ define 
a trinomial
$$
g_{i,j,k} 
\ := \ 
\alpha_{jk}T_i^{l_i} 
\ + \ 
\alpha_{ki}T_j^{l_j} 
\ + \ 
\alpha_{ij}T_k^{l_k}
\ \in \
S.
$$
We define a grading of $S$ by an abelian group $K$ 
such that all the $g_{i,j,k}$ become homogeneous
of the same degree.
For this, consider the free abelian groups
$$ 
F
\ := \
\bigoplus_{i=0}^r \bigoplus_{j=1}^{n_i}  \ZZ \mal f_{ij}
\ \cong \
\ZZ^n,
\qquad \qquad
N \ := \ \ZZ^r,
$$
where we set $n := n_0 + \ldots + n_r$.
Set $l_i := (l_{i1}, \ldots, l_{in_i})$.
Then we have a linear map $P \colon F \to N$
defined by the $r \times n$ matrix 
\begin{eqnarray*}
P
& = & 
\left(
\begin{array}{cccc}
-l_0 & l_1 &   \ldots & 0 
\\
\vdots & \vdots   & \ddots & \vdots
\\
-l_0 & 0 &\ldots  & l_{r} 
\end{array}
\right).
\end{eqnarray*}
Let $P^* \colon M \to E$ be the dual map,
set $K := E / P^*(M)$ and let $Q \colon E \to K$ 
be the projection.
Let $(e_{ij})$ be the dual basis of $(f_{ij})$
and define a $K$-grading on $S$ by 
$\deg(T_{ij}) := Q(e_{ij})$.
Then all $g_{i,j,k}$ are homogeneous 
of the same degree and we 
obtain a $K$-graded factor algebra 
\begin{eqnarray*}
R(A,\mathfrak{n},L)
& := &
\KK[T_{ij}; \; 0 \le i \le r, \; 1 \le j \le n_i] 
\ / \
\bangle{g_{i,i+1,i+2}; \; 0 \le i \le r-2}.
\end{eqnarray*}
We say that the triple $(A,\mathfrak{n},L)$ 
is {\em sincere\/}, if $r \ge 2$ and 
$n_il_{ij} > 1$ for all $i,j$ hold;
this ensures that there exist in fact relations 
$g_{i,j,k}$ and none of these relations 
contains a linear term.
Note that for $r=1$ we obtain the 
diagonal complexity one gradings of the 
polynomial ring~$S$.

\begin{theorem}
\label{thm:main1}
Let $(A,\mathfrak{n},L)$ be any triple as above.
\begin{enumerate}
\item
The algebra $R(A,\mathfrak{n},L)$ 
is factorially $K$-graded;
the $K$-grading is effective, pointed 
and of complexity one.
\item
Suppose that $(A,\mathfrak{n},L)$ is sincere.
Then $R(A,\mathfrak{n},L)$ is factorial
if and only if the group $K$ is torsion free.
\end{enumerate}
\end{theorem}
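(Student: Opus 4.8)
The structure I would use is to prove statement (i) first and then reduce statement (ii) to a statement about primality of the generators. For (i), I would build the $K$-graded ring $R(A,\mathfrak{n},L)$ geometrically: the matrix $P$ defines a fan (or at least a collection of cones over the columns $v_{ij}$), and one checks that $P$ has the properties needed so that $K = E/P^*(M)$ is the divisor class group of the associated toric-like object. The key point is that $R(A,\mathfrak{n},L)$ is the ring of a Mori dream-type space obtained as a quotient, and the trinomial relations $g_{i,i+1,i+2}$ are exactly the ones describing the closure of a general $(\dim R - 1)$-torus orbit intersected with the affine toric variety attached to $P$. One shows the ring is normal (e.g. via Serre's criterion: it is a complete intersection hence Cohen--Macaulay, and the singular locus has codimension at least two after checking the Jacobian of the $g_{i,j,k}$ using the hypothesis that pairs $(a_i,a_k)$ are linearly independent), that the grading is effective (the $Q(e_{ij})$ generate $K$ by construction), pointed ($R_0 = \KK$ follows from the positivity of the $l_{ij}$ and $n_i$, i.e. the weight cone is pointed), and of complexity one ($\dim R = n - (r-1)$ and $\rank K = n - r$, so $\rank K = \dim R - 1$). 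The factoriality of the $K$-grading then follows from the general principle that the Cox ring of such a quotient is factorially graded --- concretely, the $K$-primes are (up to units) the classes $T_{ij}$ and the primitivity in degree is governed by the structure of $P$.

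For statement (ii), I would use the criterion (from \cite{Anders} or the Cox-ring literature, which is available once we know $R$ is factorially $K$-graded) that a factorially $K$-graded ring $R$ is factorial if and only if it has no nontrivial homogeneous units and the degree map restricted to the $K$-primes has the right properties --- more precisely, $R$ is a UFD exactly when $K$ is the free group generated freely, modulo relations forced by the relations $g_{i,j,k}$, and the obstruction to factoriality is precisely the torsion of $K$. So the real content is: if $K$ is torsion free, then $R(A,\mathfrak{n},L)$ is a UFD. The plan here is to exhibit $R$ as $\Cox$ of a variety (or directly) and invoke that factorially graded plus torsion-free grading group implies factorial; alternatively one checks directly that every homogeneous prime ideal of height one is principal, using that the $T_{ij}$ generate the class group and the relations do not create new non-principal divisors when $K$ is free.

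The converse direction --- if $(A,\mathfrak{n},L)$ is sincere and $K$ has torsion, then $R$ is \emph{not} factorial --- is where I expect the main work. One must produce an explicit irreducible non-prime homogeneous element, or equivalently two factorizations of some element into irreducibles. The natural candidate comes from the torsion: pick a nonzero torsion element $\kappa \in K$; since the grading is pointed and the $T_{ij}$ generate $K$, there is a monomial $m$ in the $T_{ij}$ with $\deg(m) = \kappa$, and $\kappa$ torsion means some power $m^d$ has degree $0$, hence (using $R_0 = \KK$) equals a constant, giving a relation that forces a failure of unique factorization among the $T_{ij}$-monomials; the sincerity hypothesis ($r \ge 2$ and $n_i l_{ij} > 1$) is exactly what guarantees enough generators and high enough exponents for such a relation to be nontrivial and to live genuinely inside $R$ rather than being killed. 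Making this fully rigorous --- in particular showing the resulting element is irreducible in the ungraded sense --- is the delicate part, and I would expect to need a careful analysis of the monomial structure modulo the ideal $\bangle{g_{i,i+1,i+2}}$, perhaps via a normal-form / Gröbner argument with respect to a suitable term order, or by passing to the associated affine variety and arguing geometrically that its class group is $K$ and hence nontrivial torsion obstructs factoriality.
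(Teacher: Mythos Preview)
Your outline for part~(i) and for the implication ``$K$ torsion free $\Rightarrow$ $R$ factorial'' in~(ii) is essentially the paper's approach: realise $R(A,\mathfrak{n},L)$ (after adjoining free variables) as a Cox ring, deduce factorial gradedness, and then invoke the result of~\cite{Anders} that for a torsion-free grading group factorially graded implies factorial. The verifications of normality, effectivity, pointedness and complexity one are also along the paper's lines.

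The genuine gap is in your converse argument for~(ii). You propose: pick a nonzero torsion class $\kappa\in K$, find a monomial $m$ in the $T_{ij}$ with $\deg(m)=\kappa$, and use $R_0=\KK$ to force $m^d$ to be constant. But no such monomial exists. The degrees $\deg(T_{ij})=Q(e_{ij})$ generate $K$ only as a \emph{group}, not as a monoid, and the pointedness $R_0=\KK$ says precisely that no nontrivial non-negative integer combination of the $Q(e_{ij})$ is zero; hence no nontrivial such combination can be torsion either (its $d$-th multiple would be zero). So any expression of $\kappa$ in terms of the $\deg(T_{ij})$ necessarily involves negative coefficients, $m$ is only a Laurent monomial, and the step ``$m^d\in R_0=\KK$'' fails. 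Your fallback suggestion of computing the divisor class group of the affine variety $\t X=\Spec R$ and identifying it with the torsion of $K$ is the right instinct, but it is not established in your sketch and is in fact nontrivial.

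The paper's route avoids this entirely. Assuming $R(A,\mathfrak{n},L)$ is factorial and $(A,\mathfrak{n},L)$ is sincere, one first shows that each $T_{ij}$ is actually \emph{prime} (not merely $K$-prime): this uses that the divisor of $T_{ij}$ on $\t X$ is $H$-prime together with a short argument that, under $n_il_{ij}>1$, the degree $\deg(T_{ij})$ cannot be written as a non-negative combination of the other generator degrees, so $T_{ij}$ cannot split nontrivially. Once the $T_{ij}$ are prime, a lemma from~\cite{HaHeSu} forces the integers $\gcd(l_{i1},\ldots,l_{in_i})$ to be pairwise coprime; this in turn makes the map $P\colon F\to N$ surjective, so $K=E/P^*(M)$ is torsion free. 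Thus the sincerity hypothesis enters not to manufacture a monomial relation, but to guarantee that factoriality upgrades $K$-primality of the generators to genuine primality.
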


The second part of this theorem provides  
examples of factorially graded algebras which 
are not factorial;
note that $K$ is torsion free if and only if the 
numbers $l_i := \gcd(l_{i1}, \ldots, l_{in_i})$
are pairwise coprime.

\begin{example}
Let $A$ consist of the vectors 
$(1,0)$, $(1,1)$ and $(0,1)$,
take $\mathfrak{n} = (1,1,1)$ 
and take the family $L$ given by 
$l_{01} = l_{11} = l_{21} = 2$.
Then the matrix 
$$ 
\left(
\begin{array}{rrr}
-2 & 2 & 0
\\
-2 & 0 & 2
\end{array}
\right)
$$
describes the map $P \colon \ZZ^3 \to \ZZ^2$.
Thus the grading group is 
$K = \ZZ \oplus  \ZZ/2\ZZ \oplus  \ZZ/2\ZZ$.
Concretely this grading can be realized as
$$ 
\deg(T_{01}) = (1,\b{0}, \b{0}),
\qquad
\deg(T_{11}) = (1,\b{1}, \b{0}),
\qquad
\deg(T_{21}) = (1,\b{0}, \b{1}).
$$
The associated algebra $R(A,\mathfrak{n},L)$ 
is factorially $K$-graded but not factorial.
It is explicitly given by
\begin{eqnarray*}
R(A,\mathfrak{n},L)
& = & 
\KK[T_{01},T_{11},T_{21}] / \bangle{T_{01}^2-T_{11}^2+T_{21}^2}.
\end{eqnarray*}
\end{example}

Obvious further examples of algebras with an
effective pointed factorial grading are
$R(A,\mathfrak{n},L)[S_1,\ldots,S_m]$,
graded by $K \times \ZZ^m$
via $\deg(S_i) := e_i$, where $e_i \in \ZZ^m$
denotes the $i$-th canonical basis vector.
The methods of~\cite[Section~3]{HaSu} 
apply directly to our situation and show 
that there are no other examples, i.e., 
we arrive at the following.

\begin{theorem}
\label{thm:main2}
Every finitely generated normal $\KK$-algebra 
with an effective, pointed, factorial grading 
of complexity one is isomorphic to some  
$R(A,\mathfrak{n},L)[S_1,\ldots,S_m]$.
\end{theorem}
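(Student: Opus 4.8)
The plan is to transcribe the analysis of \cite[Sec.~3]{HaSu} from varieties to algebras; the one new feature is that the grading group $K$ may now carry torsion, and the point is to check that this does not interfere. First I would geometrise. Let $R$ be a finitely generated normal $\KK$-algebra with an effective, pointed, factorial $K$-grading of complexity one, put $\bar X := \Spec R$, and let $H := \Spec\KK[K]$ be the diagonalizable group acting on $\bar X$ via the grading. Effectiveness makes the action faithful; the condition $R_0 = \KK$ says that the only homogeneous units are the constants, that the weight cone is pointed, and that $\bar X$ has a unique $H$-fixed point; and complexity one reads $\dim\bar X = \dim H + 1$. Hence the torus $T := H^{\circ}$, of dimension $\rk K$, acts on $\bar X$ with complexity one, and the theory of such actions used in \cite{HaSu} presents $\bar X$ by a polyhedral divisor $\mathfrak D = \sum_P \Delta_P \mal P$ on a smooth curve $Y$ with $\KK(Y) = \KK(\bar X)^T$: writing $\sigma$ for the common recession cone of the $\Delta_P$ and $M$ for the character lattice of $T$, one has $R = \bigoplus_{u\in\sigma^{\vee}\cap M}\Gamma(Y,\Of_Y(\mathfrak D(u)))$ as an $M$-graded algebra refining the given $K$-grading, with all but finitely many $\Delta_P$ equal to $\sigma$. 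Evaluating at $u=0$ gives $R_0 = \Gamma(Y,\Of_Y)$, so pointedness forces $Y$ to be a smooth \emph{complete} curve.

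The crucial step is to see that $Y \cong \PP^1$. Suppose $Y$ had positive genus. Since $\bar X$ is affine while $Y$ is complete, the polyhedral divisor $\mathfrak D$ is nontrivial, and a degree count as in \cite{HaSu} produces among the natural generators of $R$ a homogeneous element $f$ arising as a section of a line bundle $\Of_Y(D)$ with $\deg D \ge 2$. Such an $f$ admits no nontrivial factorisation into homogeneous nonunits, yet it is not a $K$-prime: on a curve of positive genus two distinct effective divisors of the same class need not be comparable, so one finds homogeneous $f',f''$ in $R$ with $f\mid f'f''$ but $f\nmid f'$ and $f\nmid f''$; at bottom this is the obstruction posed by the non-finitely-generated group $\Pic^0(Y)$. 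This contradicts factoriality of the $K$-grading, so $Y$ has genus zero, i.e.\ $Y\cong\PP^1$. This is exactly the point at which the argument of \cite[Sec.~3]{HaSu} enters, and the torsion of $K$ plays no role in it.

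It remains to read off generators and relations, again following \cite[Sec.~3]{HaSu}. Over $Y\cong\PP^1$, fix homogeneous coordinates $z_0,z_1$, let $P_0,\dots,P_r$ be the points with $\Delta_{P_i}\ne\sigma$, write $P_i=[b_i:c_i]$ with $a_i:=(b_i,c_i)\in\KK^2$ pairwise linearly independent --- this is the sequence $A$ --- and let $n_i$ be the number of vertices of $\Delta_{P_i}$, the vertices furnishing the weights $l_{ij}$; this gives $\mathfrak n=(n_0,\dots,n_r)$ and $L=(l_{ij})$. One then obtains $H$-homogeneous generators of $R$: coordinates $T_{ij}$, $1\le j\le n_i$, for which the monomial $T_i^{l_i}=T_{i1}^{l_{i1}}\cdots T_{in_i}^{l_{in_i}}$ is the pullback of the linear form $c_iz_0-b_iz_1$ vanishing at $P_i$, together with one further variable $S_k$ for each $T$-invariant prime divisor of $\bar X$ dominating $\PP^1$. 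Any three of the forms $c_iz_0-b_iz_1$ satisfy the Cramer identity $\alpha_{jk}(c_iz_0-b_iz_1)+\alpha_{ki}(c_jz_0-b_jz_1)+\alpha_{ij}(c_kz_0-b_kz_1)=0$ with $\alpha_{ij}:=\det(a_i,a_j)$, so the trinomials $g_{i,j,k}=\alpha_{jk}T_i^{l_i}+\alpha_{ki}T_j^{l_j}+\alpha_{ij}T_k^{l_k}$ lie in the relation ideal of $R$, and one checks as in \cite{HaSu} that already the consecutive ones $g_{i,i+1,i+2}$ generate it. This yields a $K$-graded isomorphism $R\cong R(A,\mathfrak n,L)[S_1,\dots,S_m]$.

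I expect the work to be spread out rather than concentrated in a single hard step: recognising the base curve as $\PP^1$ is the conceptual heart, while the larger part of the effort is the faithful transcription of the explicit Cox-ring computation of \cite{HaSu} together with the repeated check --- particularly in the identification of the homogeneous generators and of the relation ideal --- that the torsion part of $K$ contributes nothing beyond bookkeeping.
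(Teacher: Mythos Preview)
Your proposal is essentially correct and matches the paper's approach, which is simply to invoke \cite[Sec.~3]{HaSu} without further detail; your sketch via polyhedral divisors, rationality of the quotient curve, and reading off the trinomial relations is a faithful reconstruction of how that argument runs. One small slip: since $T=H^{\circ}$ has character lattice $M\cong K/K_{\mathrm{tors}}$, the $M$-grading coming from the polyhedral-divisor description is a \emph{coarsening} of the given $K$-grading, not a refinement.
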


We turn to Cox rings. Roughly speaking, the Cox ring 
of a complete normal variety $X$ with finitely 
generated divisor class group $\Cl(X)$ is given
as
\begin{eqnarray*}
\mathcal{R}(X)
& := & 
\bigoplus_{\Cl(X)} \Gamma(X,\mathcal{O}_X(D)),
\end{eqnarray*}
see~\cite{ArDeHaLa} for the details of the 
precise definition. 
As mentioned, our aim is to write down all possible 
Cox rings of normal rational complete varieties with 
a complexity one torus action.
They are obtained from
$R(A,\mathfrak{n},L)[S_1,\ldots,S_m]$
by coarsening the $(K \times \ZZ^m)$-grading
as follows.
Let $0 < s < n+m-r$,
consider an integral $s \times n$ matrix $d$,
an integral $s \times m$ matrix $d'$
and the block matrix
\begin{eqnarray*}
\dot P
& = & 
\left( 
\begin{array}{rr}
P & 0 
\\
d & d'  
\end{array}
\right),
\end{eqnarray*}
where $d$ and $d'$ are chosen in such a manner that 
the columns of the matrix $\dot P$ are pairwise 
different primitive vectors in $\ZZ^{r+s}$
which generate $\QQ^{r+s}$ as a cone.
Consider the linear map of lattices 
$\dot P \colon \ddot F \to \dot N$,
where 
$$
\ddot F 
\ := \ 
F \oplus \ZZ f_1 \oplus \ldots \oplus \ZZ f_m,
\qquad \qquad
\dot N 
\ := \ 
\ZZ^{r+s}.
$$
Let $\dot P^* \colon \dot M \to \ddot E$ be the dual 
map and $\dot Q \colon \ddot E \to \dot K$ the projection,
where $\dot K := \ddot E / \dot P^*(\dot M)$.
Denoting by $e_{ij}$, $e_k$ the dual basis to 
$f_{ij}$, $f_k$, we obtain a $\dot K$-grading 
of $R(A,\mathfrak{n},L)[S_1,\ldots,S_m]$ 
by setting
$$ 
\deg(T_{ij}) \ := \ \dot Q(e_{ij}),
\qquad
\deg(S_{k}) \ := \ \dot Q(e_{k}).
$$

\begin{theorem}
\label{thm:main3}
In the above notation, the following holds.
\begin{enumerate}
\item
The $\dot K$-grading of $R(A,\mathfrak{n},L)[S_1,\ldots,S_m]$ 
is effective, pointed and factorial.
Moreover, $T_{ij}$, $S_k$ define pairwise nonassociated 
$\dot K$-prime generators.
\item
The $\dot K$-graded algebra $R(A,\mathfrak{n},L)[S_1,\ldots,S_m]$ is
the Cox ring of a $\QQ$-factorial rational projective 
variety with a complexity one torus action.
\end{enumerate}
\end{theorem}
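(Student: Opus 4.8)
The plan is to reduce everything to the results already available: Theorem~\ref{thm:main1} for the factoriality and grading properties, and the Cox-ring machinery of~\cite{HaSu} and~\cite{ArDeHaLa} for the geometric statement. For part~(i), the key observation is that the $\dot K$-grading is obtained from the $(K\times\ZZ^m)$-grading on $R(A,\mathfrak n,L)[S_1,\ldots,S_m]$ by a \emph{coarsening}: the inclusion $P^*(M)\oplus 0 \subseteq \dot P^*(\dot M)$ (which holds because the first block of $\dot P$ is $P$, so a dual relation for $P$ extends by zero to a dual relation for $\dot P$) induces a surjection $K\times\ZZ^m \twoheadrightarrow \dot K$ that is compatible with the two gradings. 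First I would make this downgrading map explicit and invoke the general principle that a pointed effective factorial grading stays pointed, effective and factorial under a coarsening \emph{provided} the coarsened grading is again effective and pointed; effectivity is automatic since the $\dot Q(e_{ij})$, $\dot Q(e_k)$ generate $\dot K$, and pointedness I would read off from the hypothesis that the columns of $\dot P$ generate $\QQ^{r+s}$ as a cone, which forces $\dot P^*(\dot M)$ to meet the positive orthant in $\ddot E$ only in the origin, hence $\dot K_0=\KK$. For the "pairwise nonassociated $\dot K$-prime generators" claim, by Theorem~\ref{thm:main1}(i) the $T_{ij}$ together with the $S_k$ are already $K\times\ZZ^m$-prime and pairwise nonassociated in the finer grading; $K\times\ZZ^m$-primality descends to $\dot K$-primality exactly because the variables are actually prime (not merely graded-prime) elements of the polynomial-relation ring, and nonassociatedness is preserved since two variables can only become associated after coarsening if their degrees differ by something in the image of $M$, which the primitivity and pairwise-distinctness of the columns of $\dot P$ rule out.

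For part~(ii), the strategy is to exhibit the variety directly. Using $\dot P\colon\ddot F\to\dot N$, I would build the fan $\dot\Sigma$ in $\dot N_\QQ$ whose maximal cones are the images under $\dot P$ of those faces of the positive orthant in $\ddot F_\QQ$ that are "relevant", i.e.\ that project onto full-dimensional cones — this is precisely the combinatorial data that~\cite{HaSu} attaches to a complexity-one torus action via the notion of an $\ff$-bunch or, dually, a maximal projectable fan. Because the columns of $\dot P$ are primitive, pairwise distinct, and span $\QQ^{r+s}$ as a cone, this fan is complete and simplicial; completeness gives projectivity of the associated toric-type quotient construction, and simpliciality gives $\QQ$-factoriality. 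The complexity-one torus action comes from the subtorus of the big torus of the Cox construction corresponding to the kernel of the first $r$ rows, i.e.\ from the $N=\ZZ^r$ direction. Then I would apply the main theorem of~\cite{HaSu} (or the explicit "ring-from-data" correspondence there) to conclude that $R(A,\mathfrak n,L)[S_1,\ldots,S_m]$, with its $\dot K$-grading, is the Cox ring of the variety $X$ so constructed; the hypotheses needed for that theorem — normality, finitely generated $\Cl(X)=\dot K$, and that the $T_{ij},S_k$ give the canonical $\dot K$-prime generators — are exactly what part~(i) supplies.

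The main obstacle I expect is matching conventions: verifying carefully that the combinatorial object built from $\dot P$ is precisely the one that the Cox-ring formalism of~\cite{HaSu} and~\cite{ArDeHaLa} expects, in particular that the trinomial relations $g_{i,i+1,i+2}$ are exactly the defining equations one gets from the $T^1$-graded deformation / the rational curve $\{a_0,\ldots,a_r\}$ in the source of the complexity-one action. This is where the geometry enters in an essential way rather than as bookkeeping, and it is the step that has to be done by hand rather than cited verbatim. Everything else — descent of factoriality under coarsening, $\QQ$-factoriality from simpliciality, projectivity from completeness — is routine once the dictionary is fixed, so I would keep those parts brief and concentrate the writing on the identification of the fan and the relations.
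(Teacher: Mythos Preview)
Your approach to part~(i) has a genuine gap: factoriality of a grading does \emph{not} descend under coarsening. The ``general principle'' you invoke is false even when the coarser grading is effective and pointed. The paper's own Example illustrates this: $R=\KK[T_{01},T_{11},T_{21}]/(T_{01}^2-T_{11}^2+T_{21}^2)$ is factorially graded by $K=\ZZ\oplus(\ZZ/2\ZZ)^2$, but if you coarsen to the $\ZZ$-grading with all $\deg T_{ij}=1$ (still effective and pointed), then $T_{01}^2=(T_{11}-T_{21})(T_{11}+T_{21})$ exhibits $T_{01}$ as a $\ZZ$-homogeneous non-$\ZZ$-prime. What is true is the opposite implication: factoriality passes to \emph{refinements}. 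Your attempted rescue, that the $T_{ij}$ are ``actually prime (not merely graded-prime)'', is also false in general; Proposition~\ref{prop:divsprime}(ii) and Theorem~\ref{thm:main1}(ii) show the $T_{ij}$ are honest primes only when $K$ is torsion free, which you cannot assume. There is moreover a circularity issue: in the paper's logical order, Theorem~\ref{thm:main1}(i) is itself deduced from Proposition~\ref{prop:surfprop}, which is the content of Theorem~\ref{thm:main3}, so you cannot cite it here.

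The paper's route is essentially the reverse of yours. It first builds the variety $\dot X$ explicitly as a quotient $\ddot X/\dot H$ inside a toric ambient, then applies \cite[Theorem~1.3]{HaSu} to identify $\mathcal{R}(\dot X)$ with $R(A,\mathfrak{n},L)[S_1,\ldots,S_m]$; factoriality of the $\dot K$-grading then comes \emph{for free} from the general fact that Cox rings are factorially $\Cl$-graded, together with $\Cl(\dot X)=\dot K$ (which uses freeness of the $\dot H$-action). So the flow is geometry $\Rightarrow$ algebra, not descent along a coarsening. For part~(ii) your outline is closer in spirit, but note that ``completeness gives projectivity'' is false for toric varieties in dimension $\ge 3$; the paper obtains a projective $\QQ$-factorial completion by choosing a simplicial \emph{projective} fan with the given rays via~\cite{OdPa}, which is a separate step.
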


\begin{theorem}
\label{thm:main4}
Let $X$ be a normal rational complete variety with a 
torus action of complexity one.
Then the Cox ring of $X$ is isomorphic as a graded ring 
to some $R(A,\mathfrak{n},L)[S_1,\ldots,S_m]$ with 
a $\dot K$-grading as constructed above.
\end{theorem}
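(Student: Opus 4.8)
The plan is to reduce Theorem~\ref{thm:main4} to the known structure theory of varieties with a complexity one torus action together with Theorem~\ref{thm:main2}. First I would invoke the general fact from \cite{ArDeHaLa} that for a normal rational complete variety $X$ the Cox ring $\mathcal{R}(X)$ is a finitely generated normal $\KK$-algebra whose natural $\Cl(X)$-grading is effective, pointed (since $X$ is complete, so $\Gamma(X,\mathcal{O}_X)=\KK$), and factorial (this is precisely the statement that $\mathcal{R}(X)$ has unique factorization for homogeneous elements, which holds because $X$ is rational, hence $\Cl(X)$ is finitely generated and $\mathcal{R}(X)$ carries the structure making it a factorially graded ring). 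The torus action of complexity one on $X$ lifts to an action on $\mathcal{R}(X)$ commuting with the $\Cl(X)$-action, and combining the two gives a grading by a finitely generated abelian group of rank $\dim(X) - 1 + \rank\Cl(X) = \dim\mathcal{R}(X) - 1$; thus $\mathcal{R}(X)$ is an effectively, pointedly, factorially graded $\KK$-algebra of complexity one.

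Next I would apply Theorem~\ref{thm:main2} to conclude that this combined-graded algebra is isomorphic to some $R(A,\mathfrak{n},L)[S_1,\ldots,S_m]$ with its $(K\times\ZZ^m)$-grading. At this point we have identified $\mathcal{R}(X)$ as a ring with a fine grading, but we still must recover the \emph{coarser} $\Cl(X)$-grading as one of the grAdings $\dot K$ described before Theorem~\ref{thm:main3}. The point is that $\Cl(X)$-grading is a coarsening of the fine $(K\times\ZZ^m)$-grading — the torus action splits off a free part — so there is a surjection $K\times\ZZ^m \to \Cl(X)$, and I would need to check that the kernel of this surjection, equivalently the data of the coarsening, is exactly encoded by a block matrix $\dot P$ of the prescribed shape: the top-left block $P$ is forced because the $T_{ij}$-relations are unchanged, the new rows $(d\mid d')$ come from the extra coordinates of the torus, and the primitivity/cone-generating conditions on the columns of $\dot P$ are the translation of the requirement that the corresponding variety be complete and that $\mathcal{R}(X)$ genuinely be \emph{its} Cox ring (no redundant factors, the relevant fan being complete).

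The main obstacle will be this last bookkeeping step: showing that every admissible coarsening of the $(K\times\ZZ^m)$-grading to a class-group grading of a \emph{complete} variety arises from a block matrix $\dot P$ satisfying exactly the stated column conditions, and conversely that the data $(A,\mathfrak{n},L,\dot P)$ reconstructs $X$ up to the ambiguity allowed. Here I would lean on the correspondence, developed in \cite{HaSu}, between such graded algebras and the combinatorial ``defining matrix plus fan'' data for complexity one $T$-varieties: the columns of $\dot P$ are the primitive ray generators of the fan of a toric ambient variety, completeness of $X$ forces them to positively span $\QQ^{r+s}$, and pairwise distinctness together with the $\dot K$-primality of the generators (Theorem~\ref{thm:main3}(i)) rules out degenerate configurations. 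Once the matrix $\dot P$ is matched, Theorem~\ref{thm:main3}(ii) shows the construction does yield a projective variety with the right Cox ring, and a Cox-ring-determines-the-variety argument (again \cite{ArDeHaLa}) closes the loop, giving the asserted isomorphism of graded rings.
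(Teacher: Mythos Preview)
Your overall strategy matches the paper's: first identify $\mathcal{R}(X)$ as a ring $R(A,\mathfrak{n},L)[S_1,\ldots,S_m]$ (the paper invokes \cite[Theorem~1.3]{HaSu} directly rather than routing through Theorem~\ref{thm:main2}, but that is a minor variation), and then argue that the $\Cl(X)$-grading is one of the $\dot K$-gradings, i.e.\ comes from a matrix $\dot P$ of the prescribed block shape.

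The gap is in what you call the ``last bookkeeping step''; this is not bookkeeping but the entire technical content of the proof. With $\bar K = K \times \ZZ^m$ the finest grading making the generators homogeneous and $\dot K = \Cl(X)$, one has an exact sequence $0 \to \tilde K \to \bar K \to \dot K \to 0$ and, by the snake lemma, a dual sequence $0 \to M \to \dot M \to \dot M/M \to 0$. The block shape of $\dot P$, with $P$ in the upper left and the projection $\dot N \to N$ to the first $r$ coordinates, is equivalent to this second sequence \emph{splitting}, i.e.\ to $\tilde K \cong \dot M/M$ being \emph{free}. Your phrase ``the torus action splits off a free part'' asserts precisely this but does not justify it: a priori $\tilde K$ is the character group of the quasitorus $\bar H/\dot H$, and there is no immediate reason it should be torsion-free; the map from the acting torus $T$ to $\bar H/\dot H$ supplied by \cite{HaSu} is not obviously an isomorphism. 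The paper establishes freeness of $\tilde K$ via an isotropy-group argument: for each variable $T_{ij}$ one shows, using $\dot K$-primality (from \cite[Theorem~1.3]{HaSu}) and general Cox-ring properties \cite[Prop.~2.2]{Ha2}, that a certain subgroup $\tilde K(i,j) \subseteq \tilde K$ is free with cyclic quotient $\tilde K/\tilde K(i,j) \cong \ZZ/l_{ij}\ZZ$, and then compares with the $T$-isotropy on $X$ via \cite[Prop.~2.6]{HaSu} to conclude that $\tilde K$ itself is free. The conditions you do discuss (completeness forcing the columns to span a full cone, $\dot K$-primality forcing primitivity and distinctness) address the remaining hypotheses on $\dot P$, but not this one; without the freeness argument the block decomposition $\dot N \cong N \oplus \ZZ^s$ simply need not exist.
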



\section{Proof of the results}

A very first observation lists basic properties 
of the $K$-grading of $R(A,\mathfrak{n},L)$.
We denote by $\TT^n := (\KK^*)^n$ 
the standard $n$-torus.
Moreover, we work with the diagonal action 
of the quasitorus $H := \Spec \, \KK[K]$ 
on $\KK^n$ given by $t \mal z = (\chi_{ij}(t)z_{ij})$, 
where $\chi_{ij} \in \Chi(H)$ is the character 
corresponding to $Q(e_{ij}) = \deg(T_{ij})$.
By definition, this action stabilizes the 
zero set
$$ 
\t{X}
\ := \ 
V(\KK^n; \, g_{i,i+1,i+2}, \, 0 \le i \le r-2)
\ \subseteq \ 
\KK^n.
$$

\begin{proposition}
\label{lem:distpoints}
The algebra $R(A,\mathfrak{n},L)$
is normal, the $K$-grading is 
pointed, effective and of complexity 
one.
\end{proposition}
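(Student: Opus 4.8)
The plan is to analyze the quotient $R(A,\mathfrak{n},L) = S/\langle g_{i,i+1,i+2}\rangle$ via the variety $\t X \subseteq \KK^n$ it cuts out, splitting the claim into four parts: normality, the complexity-one property, effectivity, and pointedness.

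First I would handle the structural statements about the grading group $K$ directly from its definition $K = E/P^*(M)$. Effectivity means the degrees $\deg(T_{ij}) = Q(e_{ij})$ generate $K$: this is immediate since the $e_{ij}$ form a basis of $E$ and $Q$ is surjective. For complexity one I need $\rk K = \dim R(A,\mathfrak{n},L) - 1$. From the exact sequence $M \xrightarrow{P^*} E \to K \to 0$ one reads off $\rk K = n - \rk P^* = n - \rk P$; since $P$ has the block shape displayed (with the $-l_0$ columns repeated and the $l_i$ on the ``diagonal''), its rank is $r$, so $\rk K = n - r$. On the other hand each of the $r-1$ trinomials $g_{i,i+1,i+2}$ is irreducible (it involves disjoint sets of variables $T_i^{l_i}, T_{i+1}^{l_{i+1}}, T_{i+2}^{l_{i+2}}$ with nonzero coefficients $\alpha_{jk}$, the latter being nonzero by the linear-independence hypothesis on the $a_i$) and they form a regular sequence in $S$, so $\dim R(A,\mathfrak{n},L) = n - (r-1)$; hence $\rk K = \dim R(A,\mathfrak{n},L) - 1$ as required.

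Pointedness, $R_0 = \KK$, I would get from the observation that the $H = \Spec\KK[K]$-action on $\KK^n$ has no nonconstant invariants: $R(A,\mathfrak{n},L)_0 = \Gamma(\t X, \mathcal{O})^H$, and since the weight cone $\cone(Q(e_{ij})) \subseteq K_\QQ$ spanned by the variable degrees is pointed (which follows because all columns of $P$ lie in a halfspace — indeed one can exhibit a linear form positive on every $Q(e_{ij})$ using that the $l_{ij}$ are positive), the only invariant monomials are constants, and pointedness of $\t X$ itself (it contains the fixed point $0$) forces $R_0 = \KK$. Normality is the step I expect to be the main obstacle. The trinomial hypersurface $\{g=0\}$ for a single $g$ is a complete intersection, hence Cohen–Macaulay, so by Serre's criterion normality reduces to regularity in codimension one; the delicate point is that the relations $g_{i,i+1,i+2}$ only couple consecutive triples, so $\t X$ is built as an iterated fiber product and I must show the singular locus has codimension at least two in $\t X$. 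Concretely I would compute the Jacobian of $(g_0,\ldots,g_{r-2})$ and check that it drops rank only along a locus cut out by enough further equations — this is where the sincerity-type positivity of the exponents and the pairwise independence of the $a_i$ enter — and then invoke $S_2$ from the complete-intersection property. I would expect this verification to mirror the free-grading case in \cite[Section~1]{HaHeSu}, so the torsion in $K$ plays no role here and the argument there can be transported essentially verbatim.
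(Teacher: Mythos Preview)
Your proposal is correct and matches the paper's proof: effectivity and the dimension count are handled identically; for normality the paper simply cites \cite[Prop.~1.2]{HaHeSu}, which is precisely the complete-intersection plus Jacobian/Serre argument you sketch; and for pointedness the paper exhibits the explicit one-parameter subgroup $t\mapsto(t^{\zeta_{ij}})$ of $H$ with $\zeta_{ij}=n_i^{-1}l_{ij}^{-1}\prod_k n_k\prod_{k,m}l_{km}>0$ and observes that every orbit closure then contains the origin, which is exactly the dual formulation of your positive linear form on $K_\QQ$.

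One small correction: your parenthetical ``all columns of $P$ lie in a halfspace'' is false (the $-l_0$ columns and the $l_i$ columns point in opposite directions in $N$) and is in any case not the relevant condition. A linear form on $K=E/P^*(M)$ is an element of $\ker(P)\subseteq F_\QQ$, so what you need---and what the positivity of the $l_{ij}$ actually buys you---is a strictly positive vector in $\ker(P)$; the paper's $(\zeta_{ij})$ is exactly such a vector.
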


\begin{proof}
Effectivity of the $K$-grading is 
given by construction, because the degrees
$\deg(T_{ij}) = Q(e_{ij})$ generate~$K$.
From~\cite[Prop.~1.2]{HaHeSu}, we infer 
that $R(A,\mathfrak{n},L)$ is a normal
complete intersection. 
Thus, we have
$$
\dim(R(A,\mathfrak{n},L))
\ = \  
n-r+1
\ = \ 
\dim(\ker(P)) + 1
$$
which means that the $K$-grading is of 
complexity one.
Now consider the action of 
the quasitorus $H :=  \Spec \, \KK[K]$ 
on $\KK^n$ given by the $K$-grading.
Note that $H \subseteq \TT^n$ 
is the kernel of the 
homomorphism of tori
$$ 
\TT^n \ \to \ \TT^r,
\qquad
(t_0,\ldots,t_r) 
\ \mapsto \ 
\left(
\frac{t_1^{l_1}}{t_0^{l_0}}
, \ldots, 
\frac{t_{r}^{l_{r}}}{t_0^{l_0}}
\right).
$$
The set $\t{X} \subseteq \KK^n$ 
of common zeroes of all the $g_{i,i+1,i+2}$
is $H$-invariant and thus it is invariant 
under the 
one-parameter subgroup of $H$ given by 
$$
\KK^* \ \to H,
\qquad
t \ \mapsto \ (t^{\zeta_{ij}}),
\qquad\qquad
\zeta_{ij}
\ := \ 
n_i^{-1}l_{ij}^{-1} \prod_k n_k \prod_{k,m}{l_{km}}.
$$
Since all $\zeta_{ij}$ are positive,
any orbit of this one-parameter subgroup
in $\KK^n$ has the origin in its closure.
Consequently, every $H$-invariant function 
on $\t{X}$ is constant.
This shows $R(A,\mathfrak{n},L)_0 = \KK$. 
\end{proof}

We say that a Weil divisor on $\t{X}$ 
is {\em $H$-prime\/} if it is non-zero, 
has only multiplicities zero or one and 
the group $H$ permutes transitively the 
prime components with multiplicity one.
Note that the divisor $\div(f)$ of a 
homogeneous function $f \in R(A,\mathfrak{n},L)$
on $\t{X}$ is $H$-prime if and only if $f$ is 
$K$-prime~\cite[Prop.~3.2]{Ha2}.
The following is an essential ingredient 
of the proof.

\begin{proposition}
\label{prop:divsprime}
Regard the  variables $T_{ij}$ as regular 
functions on $\t{X}$.
\begin{enumerate}
\item
The divisors of the $T_{ij}$ on $\t{X}$
are $H$-prime and pairwise different.
In particular, the $T_{ij}$ define pairwise 
nonassociated $K$-prime elements in 
$R(A,\mathfrak{n},L)$.
\item
If the ring $R(A,\mathfrak{n},L)$ is factorial
and $n_il_{ij} > 1$ holds,
then the divisor of $T_{ij}$ on $\t{X}$
is even prime.
\end{enumerate}
\end{proposition}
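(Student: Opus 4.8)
The plan is to work geometrically on $\t X \subseteq \KK^n$ and to understand, for a fixed pair $(i,j)$, the zero set of the coordinate function $T_{ij}$ on $\t X$. I would begin with part~(i). Fix $(i,j)$ and consider $V_{ij} := V(\t X;\, T_{ij})$, the divisor of $T_{ij}$ on $\t X$. Since $T_{ij}$ is homogeneous, $V_{ij}$ is $H$-invariant, and by the cited characterization \cite[Prop.~3.2]{Ha2} it suffices to show that $V_{ij}$ is $H$-prime, i.e.\ reduced and with $H$ acting transitively on its multiplicity-one components. To analyze $V_{ij}$, I would substitute $T_{ij}=0$ into the defining trinomials $g_{i,i+1,i+2}$: those $g_{s,s+1,s+2}$ not involving the index $i$ are unchanged, while the (at most two) relations that do involve the block $i$ lose the monomial $T_i^{l_i}$ (because $T_i^{l_i}=T_{i1}^{l_{i1}}\cdots T_{in_i}^{l_{in_i}}$ contains the factor $T_{ij}$) and degenerate to binomials of the form $\alpha\,T_s^{l_s}+\beta\,T_t^{l_t}$ with $\alpha,\beta\neq 0$ (here the nonvanishing of the relevant $\alpha_{st}=\det(a_s,a_t)$ is exactly the linear-independence hypothesis on the pairs $(a_s,a_t)$). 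So $V_{ij}$ is cut out inside the hyperplane $\{T_{ij}=0\}\cong\KK^{n-1}$ by a system consisting of trinomials of the same shape in fewer variables together with a couple of binomials $T_s^{l_s}=c\,T_t^{l_t}$.

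The key technical point is then to show this variety is irreducible up to the $H$-action on its top-dimensional locus. I would argue as in \cite[Prop.~1.2]{HaHeSu}: the binomial equations $T_s^{l_s}=c\,T_t^{l_t}$ define, after removing the coordinate hyperplanes, an irreducible subtorus-translate, and the remaining trinomials continue to define a normal complete intersection on that locus; one checks the dimension count $\dim V_{ij}=\dim\t X-1$ so that $V_{ij}$ really is a divisor, and that away from a codimension-$\ge 2$ subset $V_{ij}$ is smooth and has the expected number of components, permuted transitively by the kernel of the appropriate torus homomorphism inside $H$. Pairwise distinctness of the $V_{ij}$ is immediate since $V_{ij}\not\subseteq V_{i'j'}$ for $(i,j)\neq(i',j')$: a general point of $\t X$ with $T_{ij}=0$ has all other coordinates nonzero. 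This yields that the $T_{ij}$ are pairwise nonassociated $K$-prime elements.

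For part~(ii), assume $R=R(A,\mathfrak n,L)$ is factorial and $n_il_{ij}>1$. By part~(i) the divisor $D:=\div(T_{ij})$ on $\t X$ is $H$-prime, so $D=m(C_1+\cdots+C_k)$ with the $C_\ell$ distinct primes transitively permuted by $H$; since $D$ has multiplicities zero or one we in fact have $m=1$, and I must show $k=1$. The idea is to compare two factorizations: in the factorial ring $R$, $T_{ij}$ is a product of actual prime elements; but $T_{ij}$ is also $K$-prime by~(i), and in a factorial ring a $K$-prime (= homogeneous element that divides one of any pair of homogeneous factors it divides a product of) which is moreover a product of primes must, after grouping $H$-conjugates, be a single $H$-orbit of exactly one prime when the grading group $K$ is torsion free — equivalently, $k=1$ precisely because torsion-freeness of $K$ forces the $H$-orbit through a component $C_\ell$ to be connected, hence a point. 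The hypothesis $n_il_{ij}>1$ is what guarantees $D\neq 0$ and that $T_{ij}$ is a genuine nonunit entering such relations, so the argument is not vacuous; I expect to invoke again \cite[Prop.~3.2]{Ha2} together with the structure of $H=\Spec\KK[K]$ to pin down $k=1$. The main obstacle will be this last step: carefully relating ``$H$-prime with $H$ connected'' to ``prime'' and making precise why factoriality of $R$ (as opposed to mere factorial grading) forces the single-component conclusion for those $T_{ij}$ with $n_il_{ij}>1$.
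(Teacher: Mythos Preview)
Your outline for part~(i) is in the right spirit and close to the paper's argument, though less precise. The paper does not analyze the mixture of ``unchanged trinomials plus a couple of binomials'' that you describe; instead it invokes \cite[Lemma~1.3]{HaHeSu} to rewrite the ideal so that, after setting $T_{01}=0$, \emph{all} $r-1$ relations become binomials $\alpha_{s+1,0}T_s^{l_s}+\alpha_{0s}T_{s+1}^{l_{s+1}}=0$. On the open locus where every other $T_{kl}$ is invertible, the paper then checks by hand that the solution set is a \emph{single $H$-orbit} $H\!\cdot\!\tilde x$ (not merely ``components permuted by $H$''), uses \cite[Lemma~1.4]{HaHeSu} to bound the complement in codimension~$\ge 2$, and verifies reducedness via the Jacobian at $\tilde x$. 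Your claim that the binomial locus in the torus is ``an irreducible subtorus-translate'' is not true in general (the relevant character need not be primitive), so you would in any case be forced back to the transitivity-of-$H$ argument that the paper gives.

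Part~(ii) has a genuine gap. You argue that $k=1$ ``precisely because torsion-freeness of $K$ forces the $H$-orbit through a component $C_\ell$ to be connected.'' But torsion-freeness of $K$ is \emph{not} among the hypotheses; only factoriality of $R(A,\mathfrak{n},L)$ is assumed. In fact this proposition is used in the proof of Theorem~\ref{thm:main1}(ii) to establish the implication ``$R$ factorial $\Rightarrow$ $K$ torsion free,'' so invoking torsion-freeness here would be circular. The paper's argument is entirely different and purely combinatorial: if $T_{ij}$ were not prime, write $T_{ij}=h_1\cdots h_s$ with $h_l$ prime, expand each $h_l$ into $K$-homogeneous parts, and read off that $\deg(T_{ij})$ is a nonnegative integer combination of degrees $\deg(T_{kl})$ with $(k,l)\neq(i,j)$. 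This produces a vector in $\ker(Q)=\mathrm{im}(P^*)$ with $c_{ij}=1$ and $c_{kl}\le 0$ otherwise; inspecting the rows of $P$ shows this forces $n_i=1$ and $l_{ij}=1$. That is exactly where the hypothesis $n_il_{ij}>1$ enters---not, as you suggest, to ensure $T_{ij}$ is a nonunit (which it always is, since the grading is pointed), but to rule out this lattice relation.
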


\begin{proof}
For~(i), we exemplarily show that the divisor of
$T_{01}$ is $H$-prime. 
First note that by~\cite[Lemma~1.3]{HaHeSu}
the zero set $V(\t{X}; T_{01})$ 
is described in $\KK^{n}$ by the equations 
\begin{equation}
\label{eqn:diveqns}
T_{01} \ = \ 0,
\qquad 
\alpha_{s+1\,0}T_s^{l_s}+\alpha_{0s}T_{s+1}^{l_{s+1}} \ = \ 0,
\quad
1 \le s \le r-1.
\end{equation}
Let $h \in S$ denote the product of all 
$T_{ij}$ with $(i,j) \ne (0,1)$.
Then, in $\KK^n_h$, the above equations are 
equivalent to 
$$
T_{01} \ = \ 0,
\qquad 
-\frac{\alpha_{s+1\,0}T_s^{l_s}}{\alpha_{0s}T_{s+1}^{l_{s+1}}}
\ = \
1,
\quad
1\leq s\leq r-1.
$$
Now, choose a point $\tilde x \in \KK^n_h$ 
satisfying these equations.
Then $\tilde x_{01}$ is the only vanishing 
coordinate of $\tilde x$.
Any other such point is 
of the form
$$ 
(0,\, t_{02}\tilde x_{02},\, 
\ldots ,\, 
t_{rn_r}\tilde x_{rn_r}),
\qquad
t_{ij} \in \KK^*,
\
t_s^{l_s} = t_{s+1}^{l_{s+1}},
\
1\leq s\leq r-1.
$$
Setting $t_{01} := t_{02}^{-l_{02}}\cdots t_{0n_0}^{-l_{0n_0}}t_1^{l_1}$,
we obtain an element $t = (t_{ij}) \in H$
such that the above point equals $t \mal \tilde x$.
This consideration shows
\begin{eqnarray*}
V(\t{X}_h; T_{01}) & = & H \mal \tilde x.
\end{eqnarray*}
Using~\cite[Lemma~1.4]{HaHeSu}, we see that 
$V(\t{X}; T_{01},T_{ij})$ is of codimension at least two
in $\t{X}$ whenever $(i,j) \ne (0,1)$.
This allows to conclude
\begin{eqnarray*}
V(\t{X}; T_{01}) & = & \b{H \mal \tilde x}.
\end{eqnarray*}
Thus, to obtain that $T_{01}$ defines 
an $H$-prime divisor on $\t{X}$, 
we only need that
the equations~(\ref{eqn:diveqns})
define a radical ideal.
This in turn follows from the 
fact that their Jacobian at the point 
$\tilde x \in V(\t{X};T_{01})$ is of 
full rank.

To verify~(ii), 
let $R(A,\mathfrak{n},L)$ be factorial. 
Assume that the divisor of $T_{ij}$ 
is not prime. 
Then we have $T_{ij} = h_1 \cdots h_s$
with prime elements $h_l \in R(A,\mathfrak{n},L)$.
Consider their decomposition
into homogeneous parts
\begin{eqnarray*}
h_l 
& = &
\sum_{w \in K} h_{l,w}.
\end{eqnarray*}
Plugging this into the product $h_1 \cdots h_s$
shows that $\deg(T_{ij})$ is a 
positive combination of some $\deg(T_{kl})$ 
with $(k,l) \ne (i,j)$.
Thus, there is a vector
$(c_{kl}) \in \ker(Q) \subseteq E$ 
with $c_{ij} = 1$ and $c_{kl} \le 0$ 
whenever $(k,l) \ne (i,j)$.
Since, $\ker(Q)$ is spanned by the rows 
of~$P$, we must have
$n_i = 1$ and $l_{ij}=1$,
a contradiction to our assumptions.
\end{proof}

We come to the main step of the proof, 
the construction of a $T$-variety having 
$R(A,\mathfrak{n},L)[S_1, \ldots, S_m]$ 
as its Cox ring.
We will obtain $X$ as a subvariety of a toric
variety $Z$ and the construction of $Z$ is performed
in terms of fans.
As before, consider the lattice 
\begin{eqnarray*}
\ddot F 
& := & 
F \ \oplus \ \ZZ f_1 \oplus \ldots \oplus \ZZ f_m.
\end{eqnarray*} 
Let $\ddot \Sigma$ be the fan in $\ddot F$
having the rays $\ddot \varrho_{ij}$ 
and~$\ddot \varrho_{k}$ 
through the basis vectors $f_{ij}$ and~$f_{k}$ 
as its maximal cones.
Let $0 < s < n+m-r$,
choose an integral $s \times n$ matrix $d$
and an integral $s \times m$ matrix $d'$.
Consider the (block) 
matrices
$$ 
\left( 
\begin{array}{rr}
P & 0 
\\
d & d'  
\end{array}
\right),
\qquad\qquad
\left( 
\begin{array}{rr}
P & 0 
\end{array}
\right)
$$
and suppose that the columns of the first one
are primitive, pairwise different and 
generate $\dot N_{\QQ}$ as a cone, 
where $\dot N := \ZZ^{r+s}$.
With $N := \ZZ^r$, we have the projection
$B \colon \dot N \to N$ onto the 
first $r$ coordinates and the linear 
maps 
$\dot P \colon \ddot F \to \dot N$ 
and 
$\ddot P \colon \ddot F \to N$
respectively given by the above  
matrices.

Let $\dot \Sigma$ be the fan in $\dot N$ 
with the rays 
$\dot \varrho_{ij} := \dot P (\ddot \varrho_{ij})$
and 
$\dot \varrho_{k} := \dot P (\ddot \varrho_{k})$
as its maximal cones.
The ray $\varrho_{ij}$ through the ${ij}$-th 
column of $P$ is given in terms
of the canonical basis vectors  
$v_1,\ldots,v_{r}$ in  $N = \ZZ^r$
as 
$$
\varrho_{ij} 
\ = \ 
\QQ_{\ge 0} v_i,
\quad
1 \le i \le r,
\qquad 
\varrho_{0j} 
\ = \ 
- \QQ_{\ge 0} (v_1+\ldots+v_r).
$$
For fixed $i$ all $\varrho_{ij}$ are 
equal to each other; we list them 
nevertheless all separately in a system 
of fans $\Sigma$ 
having the zero cone as the common 
gluing data; see~\cite{ACHa} for the 
formal definition of this concept.
Finally, we have the fan $\Delta$ in
$\ZZ^r$ with the rays $\QQ_{\ge 0} v_i$
and $- \QQ_{\ge 0} (v_1+\ldots+v_{r})$
as its maximal cones.

\begin{center}
\begin{picture}(0,0)%
\includegraphics{ambient.pstex}%
\end{picture}%
\setlength{\unitlength}{1243sp}%
\begingroup\makeatletter\ifx\SetFigFont\undefined%
\gdef\SetFigFont#1#2#3#4#5{%
  \reset@font\fontsize{#1}{#2pt}%
  \fontfamily{#3}\fontseries{#4}\fontshape{#5}%
  \selectfont}%
\fi\endgroup%
\begin{picture}(4998,4116)(1543,-6619)
\put(6526,-4786){\makebox(0,0)[lb]{\smash{{\SetFigFont{8}{9.6}{\familydefault}{\mddefault}{\updefault}{\color[rgb]{0,0,0}$B$}%
}}}}
\put(6076,-6361){\makebox(0,0)[lb]{\smash{{\SetFigFont{8}{9.6}{\familydefault}{\mddefault}{\updefault}{\color[rgb]{0,0,0}$\Delta$}%
}}}}
\put(6076,-3436){\makebox(0,0)[lb]{\smash{{\SetFigFont{8}{9.6}{\familydefault}{\mddefault}{\updefault}{\color[rgb]{0,0,0}$\dot \Sigma$}%
}}}}
\end{picture}%

\end{center}

The toric variety $\ddot Z$ associated to 
$\ddot \Sigma$ 
has $\Spec \,  \KK[\ddot E] \cong \TT^{n+m}$
as its acting torus,
where $\ddot E$ is the dual lattice of $\ddot F$.
The fan $\dot \Sigma$ in $\dot N$ 
defines a toric variety $\dot Z$ and the system 
of fans $\Sigma$ defines a toric prevariety $Z$.
The toric prime divisors corresponding to 
the rays 
$\ddot \varrho_{ij}, \ddot \varrho_k \in \ddot \Sigma$,
$\dot \varrho_{ij}, \dot \varrho_k \in \dot \Sigma$
and $\varrho_{ij} \in \Sigma$,
are denoted as 
$$
\ddot D_{ij}, \ddot D_k \subseteq \ddot Z,
\qquad\qquad
\dot D_{ij}, \dot D_k \subseteq \dot Z,
\qquad\qquad
 D_{ij} \subseteq Z.
$$
The toric variety associated to $\Delta$ 
is the open subset $\PP_r^{(1)} \subseteq \PP_r$
of the projective space obtained by removing all 
toric orbits of codimension at least two.
The maps $\dot P$ and $\ddot P$ define toric morphisms
$\dot \pi \colon \ddot Z \to \dot Z$ and 
$\ddot \pi \colon \ddot Z \to Z$.
Moreover, $B \colon \dot N \to N$ 
defines a toric morphism $\beta \colon \dot Z \to Z$
and  the identity $\ZZ^r \to \ZZ^r$ 
defines a toric morphism
$\kappa \colon Z \to \PP_r^{(1)}$. 
These morphisms fit into the commutative diagram
$$ 
\xymatrix{
{\ddot Z}
\ar[rr]^{\dot \pi} 
\ar[dr]_{\ddot \pi}
& & 
{\dot Z}
\ar[dl]^{\beta}
\\
& Z \ar[d]_{\kappa} & 
\\
& {\PP_r^{(1)}} & 
}
$$
Note that $\kappa  \colon Z \to \PP_r^{(1)}$ 
is a local isomorphism which, for fixed $i$, 
identifies all the 
divisors $D_{ij}$ with $1 \le j \le n_i$.
Let $\dot H \subseteq \TT^{n+m}$ 
and 
$\ddot H \subseteq \TT^{n+m}$ 
be the kernels of the toric morphisms
$\dot \pi \colon \ddot Z \to \dot Z$ and 
$\ddot \pi \colon \ddot Z \to Z$
respectively.
Here are the basic features 
of our construction.

\begin{proposition}
\label{prop:ambient}
In the above notation, the following statements 
hold.
\begin{enumerate}
\item 
With $\ddot Z_0 := \ddot Z \setminus 
(\ddot D_1 \cup \ldots \cup \ddot D_m)$,
the restriction $\ddot \pi \colon \ddot Z_0 \to Z$
is a geometric quotient for the action of 
$\ddot H$ on $\ddot Z_0$.
\item
The quasitorus $\dot H$ acts freely on 
$\ddot Z$ and  
$\dot \pi \colon \ddot{Z} \to \dot Z$
is the geometric 
quotient for this action.
\item 
The factor group $G := \ddot H / \dot H$ is 
isomorphic to $\TT^s$ and it acts canonically 
on $\dot Z$.  
\item
The $G$-action on $\dot Z$ has infinite isotropy
groups along $\dot D_1, \ldots ,\dot D_m$ 
and isotropy groups of order $l_{ij}$ along 
$\dot D_{ij}$.
\item 
With $\dot Z_0 := \dot Z \setminus 
(\dot D_1 \cup \ldots \cup \dot D_m)$,
the restriction $\beta \colon \dot Z_0 \to Z$
is a geometric quotient for the action of 
$G$ on $\dot Z$.
\end{enumerate}
\end{proposition}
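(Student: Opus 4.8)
The plan is to derive all five assertions from the combinatorial description of those toric morphisms that arise as quotients by subquasitori; it is convenient to settle (ii)--(v) first and obtain (i) at the end by composing quotients. The criterion I will invoke reads: if $\psi$ is a homomorphism of lattices that is surjective over $\QQ$ and the assignment $\sigma\mapsto\psi(\sigma)$ is a bijection between the cones of the source (system of) fan and those of the target, then the associated toric morphism is a geometric quotient for the diagonal action of the kernel of the induced torus homomorphism; if in addition $\psi$ sends the primitive generator of each ray of a maximal cone to a primitive lattice vector, that kernel acts freely. The relevant inputs from the construction are that $\ddot\Sigma$ and $\dot\Sigma$ have the rays as their maximal cones, so $\ddot Z$ and $\dot Z$ are smooth, and that the columns of $\dot P$ are pairwise different primitive vectors of $\dot N$, so $\dot P$ carries $\ddot\varrho_{ij},\ddot\varrho_k$ bijectively onto $\dot\varrho_{ij},\dot\varrho_k$ and restricts to a lattice isomorphism on each.

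Granting this, (ii) is the criterion applied to $\dot\pi$, with $\dot H=\Spec\KK[\dot K]$ and $\dot K=\ddot E/\dot P^*(\dot M)$; primitivity of the columns of $\dot P$ is precisely what makes the $\dot H$-isotropy along each $\ddot D_{ij}$ and $\ddot D_k$ trivial. For (iii), the identity $\ddot P=B\circ\dot P$ yields $\ddot P^*(M)\subseteq\dot P^*(\dot M)$, hence an inclusion $\dot H\subseteq\ddot H$ inside $\TT^{n+m}$; since $\dot\pi$ is a geometric quotient and $\ddot H$ is abelian, the $\ddot H$-action on $\ddot Z$ descends to an action of $G:=\ddot H/\dot H$ on $\dot Z=\ddot Z/\dot H$, and a direct computation identifies $G$ with $\Spec\KK[\dot P^*(\dot M)/\ddot P^*(M)]\cong\Spec\KK[\dot M/B^*(M)]\cong\TT^s$, where one uses that $B$ is the projection onto the first $r$ coordinates.

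Assertion (iv) is a lattice computation. The subtorus $G$ of the big torus of $\dot Z$ has cocharacter lattice $\ker(B)=\{0\}^r\times\ZZ^s\subseteq\dot N$, and its isotropy along the toric divisor of a ray $\tau$ of $\dot\Sigma$ equals $G\cap\TT_{\lin(\tau)\cap\dot N}$, the intersection inside the big torus with a one-dimensional subtorus. This intersection is finite exactly when $\lin(\tau)$ meets $\ker(B)$ only in $0$, and then its order is the index in $\dot M$ of the sum of the annihilators of $\ker(B)$ and of $\lin(\tau)$. For $\tau=\dot\varrho_k$ the primitive generator $(0,d'_k)$ lies in $\ker(B)_\QQ$, so the intersection is positive-dimensional and the isotropy is infinite. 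For $\tau=\dot\varrho_{ij}$ the primitive generator is $(l_{ij}v_i,d_{ij})$ when $i\ge 1$, respectively $(-l_{0j}(v_1+\ldots+v_r),d_{0j})$ when $i=0$, with $d_{ij}$ the $ij$-th column of $d$; it meets $\ker(B)$ only in $0$, the isotropy is finite, and unwinding the index gives the order $l_{ij}/e$ with $e$ the greatest common divisor of $l_{ij}$ and the entries of $d_{ij}$. Since that column of $\dot P$ is primitive, $e=1$, so the order is $l_{ij}$.

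Finally (v) is the criterion applied to $\beta$: here $B$ is the projection $\dot N\to\dot N/\ker(B)=N$, and restricted to the subfan $\dot\Sigma_0$ of $\dot\Sigma$ obtained by deleting the rays $\dot\varrho_k$ it induces a bijection onto the system of fans $\Sigma$, sending $\dot\varrho_{ij}$ onto $\varrho_{ij}=\QQ_{\ge 0}v_i$; the only discrepancy, that the primitive generator of $\dot\varrho_{ij}$ is sent to $l_{ij}$ times that of $\varrho_{ij}$, is exactly the finite stabilizer $l_{ij}$ found in (iv), and it is along the $\dot D_k$ that $G$-orbits fail to be closed, which is why $\dot Z_0$ is the correct invariant open set. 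Part (i) then follows by composing quotients: restricting the geometric quotient $\dot\pi$ to the saturated open set $\ddot Z_0=\dot\pi^{-1}(\dot Z_0)$ and following it with $\beta$, and using $\ddot\pi=\beta\circ\dot\pi$ together with the extension $1\to\dot H\to\ddot H\to G\to 1$, exhibits $\ddot\pi\colon\ddot Z_0\to Z$ as a geometric quotient for $\ddot H$. The step deserving the most care is the quotient criterion for the toric \emph{pre}variety $Z$, that is, checking that the non-separatedness of $\Sigma$ --- the $n_i$ separate copies of each ray $\QQ_{\ge 0}v_i$ --- is faithfully mirrored by the distinct rays $\dot\varrho_{ij}$ and $\ddot\varrho_{ij}$ upstairs; the rest is elementary linear algebra with the block matrix $\dot P$ and the primitivity of its columns.
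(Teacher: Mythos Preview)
Your argument is correct and follows essentially the same route as the paper: both rely on the fan-theoretic characterization of toric geometric quotients (the paper cites~\cite{ACHa}) together with the explicit lattice formula for the isotropy groups at distinguished points (the paper cites~\cite[Prop.~II.1.4.2]{ArDeHaLa}). The only structural difference is the order of deduction --- the paper establishes (i) and (ii) directly from the fan criterion and then obtains (v) by factoring the quotient $\ddot\pi=\beta\circ\dot\pi$, whereas you establish (ii) and (v) directly and then obtain (i) by composing quotients --- which is immaterial.
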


\begin{proof}
The fact that $\ddot \pi \colon \ddot Z_0 \to Z$
and $\dot \pi \colon \ddot Z \to \dot Z$
are geometric quotients is due to
known characterizations of these notions 
in terms of (systems of) fans, 
see e.g.~\cite{ACHa}.
As a consequence, also $\beta \colon \dot Z_0 \to Z$
is a geometric quotient for the induced action 
of $G = \ddot H / \dot H$.

We verify the remaining part of~(i). 
According to~\cite[Prop.~II.1.4.2]{ArDeHaLa},
the isotropy group of
$\dot H = \ker (\dot \pi)$ 
at a distinguished point
$z_{\ddot \varrho} \in \ddot Z$
has character group isomorphic to
$$ 
\ker(\dot P) \cap \lin_\QQ(\ddot \varrho)
\ \oplus \ 
(\dot P(\lin_\QQ(\ddot \varrho)) \cap \dot N)
/
\dot P(\lin_\QQ(\ddot \varrho) \cap \ddot F).
$$
By the choice of $d$ and $d'$, the map
$\dot P$ sends the primitive generators 
of the rays of $\ddot \Sigma$ 
to the primitive generators of the rays of $\dot \Sigma$.
Thus we obtain that the isotropy 
of $z_{\ddot \varrho_{ij}}$
and $z_{\ddot \varrho_{k}}$ 
are all trivial.

We turn to~(iii).
With the dual lattices $\dot M$ of 
$\dot N$  and $M$ of $N$, we obtain the 
character groups of $\dot H$ and $\ddot H$ 
and the factor group $\ddot H/ \dot H$ as 
$$
\Chi(\dot H) \ = \ \ddot E / \dot P^*(\dot M),
\qquad
\Chi(\ddot H) \ = \ \ddot E / \ddot P^*(M),
\qquad
\Chi(\ddot H/ \dot H) \ = \  \dot P^*(\dot M) /  \ddot P^*(M).
$$
By definition of the matrices of $\dot P$ and $\ddot P$, 
we have $ \dot P^*(\dot M) / \ddot  P^*(M) \cong \ZZ^s$.
This implies $G \cong \TT^s$ as claimed.

To see~(iv), note first that the group 
$G$ equals $\ker(\beta)$ and hence
corresponds to the sublattice 
$\ker(B) \subseteq \ZZ^{r+s}$.
Thus, the isotropy group 
$G_{z_{\dot \varrho}}$ for 
the distinguished point $z_{\dot \varrho} \in \dot Z$ 
corresponding to $\dot \varrho \in \dot \Sigma$ 
has character group isomorphic to
$$ 
\ker(B) \cap \lin_\QQ(\dot \varrho)
\ \oplus \ 
(B(\lin_\QQ(\dot \varrho)) \cap N)
/
B(\lin_\QQ(\dot \varrho) \cap \dot N).
$$
Consequently, for $\dot \varrho = \dot \varrho_{k}$
the isotropy group $G_{z_{\dot \varrho}}$ is infinite
and for $\dot \varrho = \dot \varrho_{ij}$
it is of order $l_{ij}$.
\end{proof}

Now we come to the construction of 
the embedded variety.
Let $\delta \subseteq \ddot F_{\QQ}$ 
be the orthant generated by the basis
vectors $f_{ij}$ and $f_{k}$.
The associated affine toric variety 
$\b{Z} \cong \KK^{n+m}$ is the spectrum
of the polynomial ring
$$
\KK[\ddot E \cap \delta^\vee]
\ = \ 
\KK[T_{ij},S_k; \; 0 \le i \le r, \, 1 \le j \le n_i, \, 1 \le k \le m]
\ = \
S[S_1, \ldots, S_m].
$$
Moreover, $\b{Z}$ contains 
$\ddot Z$ as an open 
toric subvariety and the complement 
$\b{Z} \setminus \ddot Z$ 
is the union of all toric orbits of 
codimension at least two. 
Regarding the trinomials $g_{i,j,k} \in S$ as elements
of the larger polynomial ring $S[S_1, \ldots, S_m]$, 
we obtain an $\ddot H$-invariant subvariety
$$ 
\b{X} 
\ := \ 
V(g_{i,i+1,i+2}; \; 0 \le i \le r-2)
\ \subseteq \ 
\b{Z}.
$$

\begin{proposition}
\label{prop:surfprop}
Set $\ddot{X} := \b{X} \cap \ddot{Z}$.
Consider the images
$\dot X := \dot \pi(\ddot{X}) \subseteq \dot Z$
and $Y := \beta(\dot X) \subseteq Z$.
\begin{enumerate}
\item
$\dot X \subseteq \dot Z$ 
is a normal closed $G$-invariant $s+1$ dimensional 
variety,
$Y \subseteq Z$ is a closed non-separated 
curve and $\kappa(Y) \subseteq \PP_r$ 
is a line.
\item
The intersection $\dot C_{ij} := \dot X \cap \dot D_{ij}$
with the toric divisor $\dot D_{ij} \subseteq \dot Z$
is a single $G$-orbit with isotropy group 
of order~$l_{ij}$. 
\item
The intersection $\dot C_k  := \dot X \cap \dot D_k$
with the toric divisor $\dot D_k \subseteq \dot Z$
is a smooth rational prime divisor consisting of
points with infinite $G$-isotropy.
\item
For every point $x \in \dot X$ not belonging to some 
$\dot C_{ij}$ or to some $\dot C_k$, the isotropy group 
$G_x$ is trivial.
\item 
The variety $\dot X$ satisfies $\Gamma(\dot X,\mathcal{O}) = \KK$,
its divisor class group and Cox ring are given by 
$$
\qquad
\Cl(\dot X) \ \cong \ \dot K,
\qquad\qquad
\mathcal{R}(\dot X) \ \cong \ R(A,\mathfrak{n},L)[S_1, \ldots, S_m].
$$ 
The variables $T_{ij}$ and $S_k$ define pairwise nonassociated 
$\dot K$-prime elements in $R(A,\mathfrak{n},L)[S_1, \ldots, S_m]$.
\item
There is a  $G$-equivariant completion
$\dot X \subseteq X$ with a $\QQ$-factorial 
projective variety $X$ such that 
$\mathcal{R}(X) = \mathcal{R}(\dot X)$
holds. 
\end{enumerate}
\end{proposition}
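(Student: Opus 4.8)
The plan is to obtain items (i)--(iv) by a bookkeeping analysis on the toric ambient spaces, and then to read off (v) and (vi) from the general description of divisor class groups and Cox rings of complexity one $T$-varieties. By \cite[Prop.~1.2]{HaHeSu} the common zero set of the $g_{i,i+1,i+2}$ in $\Spec\,S$ is a normal complete intersection; adjoining the $S_k$ only adds a factor $\KK^m$, so $\b X\subseteq\b Z$ is a normal complete intersection of dimension $n+m-r+1$, and $\ddot X=\b X\cap\ddot Z$ is normal. Since $\dot H$ acts freely on $\ddot Z$ with geometric quotient $\dot\pi\colon\ddot Z\to\dot Z$ (Proposition~\ref{prop:ambient}(ii)) and $\ddot X$ is $\ddot H$-, hence $\dot H$-invariant, the quotient $\dot X=\ddot X/\dot H$ is a normal $G$-variety of dimension $(n+m-r+1)-\dim\dot H=s+1$. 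For the curves, note that the quotient by the big torus turns the $g_{i,i+1,i+2}$ into the linear forms $\alpha_{jk}x_i+\alpha_{ki}x_j+\alpha_{ij}x_k$, which cut out a line $\ell\subseteq\PP_r$; using \cite[Lemma~1.3]{HaHeSu} together with linear independence of the pairs $(a_i,a_k)$, one sees that $\ell$ meets each coordinate hyperplane in a single point and avoids all toric strata of codimension $\ge 2$, so $\ell\subseteq\PP_r^{(1)}$, the curve $Y=\beta(\dot X)$ is its non-separated pullback along $\kappa$, and $\dim Y=1$. In particular the generic $G$-orbit in $\dot X$ has dimension $s$, so the $G$-action on $\dot X$ is of complexity one.

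For (ii)--(iv) I would analyse $\dot X$ along the toric boundary of $\dot Z$. The intersections $\dot C_{ij}=\dot X\cap\dot D_{ij}$ and $\dot C_k=\dot X\cap\dot D_k$ are the $\dot\pi$-images of $V(\ddot X;T_{ij})$ and $V(\ddot X;S_k)$. For the former I would reuse the computation in the proof of Proposition~\ref{prop:divsprime}: after localising at the product of the remaining coordinates, $V(\ddot X;T_{ij})$ is a single $\ddot H$-orbit, and \cite[Lemma~1.4]{HaHeSu} bounds the rest by codimension $\ge 2$; hence $\dot C_{ij}$ is the closure of one $G$-orbit, whose isotropy group has the order $l_{ij}$ computed in Proposition~\ref{prop:ambient}(iv). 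Since $S_k$ occurs in none of the $g_{i,i+1,i+2}$, the locus $V(\ddot X;S_k)$ is a variety of the same shape with one fewer free variable, hence irreducible, so $\dot C_k$ is a smooth rational prime divisor whose points all have infinite $G$-isotropy by Proposition~\ref{prop:ambient}(iv). A point of $\dot X$ lying on none of the $\dot C_{ij}$, $\dot C_k$ maps into a toric orbit of $\dot Z$ along which $G$ acts freely, again by Proposition~\ref{prop:ambient}(iv); this gives~(iv).

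To prove (v), I would first record that $\b X\setminus\ddot X$ is covered by the loci $V(\b X;T_{ij},T_{kl})$, $V(\b X;T_{ij},S_k)$, $V(\b X;S_k,S_l)$, each of codimension $\ge 2$ in $\b X$ by \cite[Lemma~1.4]{HaHeSu}, so that $\Gamma(\ddot X,\mathcal O)=\Gamma(\b X,\mathcal O)=R(A,\mathfrak n,L)[S_1,\ldots,S_m]$. This ring has only constant units, and its $\dot K$-degree zero part is $\KK$, because the columns of $\dot P$ generate $\QQ^{r+s}$ as a cone, so $\ker(\dot P)$ contains a strictly positive vector, and such a vector is orthogonal to $\dot P^*(\dot M)$. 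Taking $\dot H$-invariants, $\Gamma(\dot X,\mathcal O)=\KK$. Now the complexity one $G$-action on $\dot X$ has $\beta|_{\dot X}\colon\dot X\to Y$ with $Y\cong\PP_1$ as its rational quotient; the special fibres over the $r+1$ points of $\ell$ corresponding to $a_0,\ldots,a_r$ are, by (ii) and \cite[Lemmas~1.3, 1.4]{HaHeSu}, the divisors $\dot C_{ij}$ with multiplicities $l_{ij}$, and the $\dot C_k$ are the invariant prime divisors with infinite isotropy by (iii). Feeding this data into the description of divisor class groups and Cox rings of complexity one $T$-varieties in \cite{HaSu} returns, by the way $A$, $\mathfrak n$, $L$ and $d$, $d'$ were chosen, $\Cl(\dot X)\cong\dot K$ and $\mathcal R(\dot X)\cong R(A,\mathfrak n,L)[S_1,\ldots,S_m]$ with the $\dot K$-grading constructed above; here $T_{ij}$ and $S_k$ are the canonical sections cutting out $\dot C_{ij}$ and $\dot C_k$, hence pairwise nonassociated $\dot K$-prime elements.

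Finally, for (vi), since the rays of $\dot\Sigma$ positively span $\dot N_\QQ$, one completes $\dot\Sigma$ --- without introducing new rays --- to a complete simplicial fan defining a $\QQ$-factorial projective toric variety $\hat Z$ that contains $\dot Z$ as an open toric subset, still with $\Cl(\hat Z)=\dot K$ and $\mathcal R(\hat Z)=\KK[T_{ij},S_k]$. As $G$ is a subtorus of the acting torus of $\dot Z$ and of $\hat Z$, the closure $X:=\b{\dot X}\subseteq\hat Z$ is a $G$-invariant projective variety with $X\cap\TT^{r+s}=\dot X\cap\TT^{r+s}$, and running the argument of (v) for $X\subseteq\hat Z$ gives $\Cl(X)\cong\dot K$ and $\mathcal R(X)=\mathcal R(\dot X)$; that $X$ is $\QQ$-factorial follows from $\QQ$-factoriality of $\hat Z$ together with $\Cl(X)_\QQ=\Cl(\hat Z)_\QQ$, cf.~\cite{HaSu}. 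The step I expect to be the main obstacle is (ii)--(iv): identifying the intersections of $\dot X$ with the toric boundary divisors precisely --- as single $G$-orbits, resp.\ prime divisors, with the prescribed isotropy groups --- since once these facts and the codimension bounds are available, (v) and (vi) follow essentially formally from the Cox ring machinery.
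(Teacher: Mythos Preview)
Your proposal is correct and follows essentially the same route as the paper: Propositions~\ref{prop:divsprime} and~\ref{prop:ambient} for the boundary analysis in (i)--(iv), then \cite[Theorem~1.3]{HaSu} for the class group and Cox ring in~(v), and a simplicial projective refinement of the fan (the paper cites~\cite{OdPa}, and then~\cite[Cor.~4.13]{Ha2} for $\QQ$-factoriality) for~(vi). The only places where the paper is sharper are minor: in~(ii) it notes that $\ddot X\cap\ddot D_{ij}$ is genuinely a single $\ddot H$-orbit (not merely an orbit closure), since on $\ddot Z$ the divisor $\ddot D_{ij}$ is exactly the locus where only $T_{ij}$ vanishes; and in~(iii) it obtains smoothness and rationality of $\dot C_k$ by observing that $\beta|_{\dot D_k}$ is an isomorphism onto the acting torus of $Z$, so that $\dot C_k$ is carried isomorphically onto the intersection of the line $\kappa(Y)$ with the big torus of~$\PP_r$.
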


\begin{proof}
By the definition of $\ddot P$ and $\ddot H$, 
the closed subvariety $\b{X} \subseteq \b{Z}$ is 
invariant under the action of $\ddot H$.
In particular, $\ddot X$ is $\dot H$-invariant
and thus the image $\dot X := \dot \pi(\ddot{X})$
under the quotient map is closed as well.
Moreover, the dimension of  $\dot X$ equals  
$\dim(\ddot X/\dot H) = s+1$.
Analogously we obtain closedness of 
$Y = \ddot \pi (\ddot X)$.
The image $\kappa(Y) = \kappa(\ddot \pi(\ddot X))$ 
is given in $\PP_r$ by the equations 
$$
\alpha_{jk}U_i
\ + \ 
\alpha_{ki}U_j
\ + \ 
\alpha_{ij}U_k
\ = \ 
0
$$
with the variables $U_0, \ldots, U_r$ on $\PP_r$
corresponding to the toric divisors
given by the rays $\QQ_{\ge 0}v_i$ 
and $-\QQ_{\ge 0}(v_0 + \ldots + v_{r-1})$ 
of $\Delta$;
to see this, use that pulling back 
the above equations via 
$\kappa \circ  \ddot \pi$ gives the defining 
equations for $\ddot{X}$.
Consequently $\kappa(Y)$ is a projective line.
This shows~(i).

We turn to~(ii).
According to Proposition~\ref{prop:divsprime},
the intersection  
$\ddot X \cap \ddot D_{ij}$ is 
a single $\ddot H$-orbit.
Since $\dot \pi \colon \ddot X \to \dot X$ 
is a geometric quotient for the 
$\dot H$-action, we conclude that 
$\dot C_{ij} = \dot \pi(\ddot D_{ij})$ 
is a single $G$-orbit.
Moreover, since $\dot H$ acts freely,
the isotropy group of $G = \ddot H / \dot H$
along $\dot C_{ij}$ equals that of $\ddot H$ 
along $\ddot D_{ij}$ which, 
by Proposition~\ref{prop:ambient}~(iv),
is of order $l_{ij}$.

For~(iii) note first that the restrictions 
$\beta \colon \dot D_k \to Z$ and 
are isomorphisms onto the acting torus 
of $Z$.
Moreover, the restricting $\kappa$
gives an isomorphism of the acting 
tori of $Z$ and $\PP_r$.
Consequently, $\beta$ maps $\dot C_k$ 
isomorphically onto the intersection 
of the line $Y$ with the acting torus 
of $\PP_r$. 
Thus, $\dot C_k$ is a smooth rational 
curve.
Proposition~\ref{prop:ambient}~(iv)
ensures that $\dot C_k$ consists of fixed 
points.
Assertion~(iv) is clear.

We prove~(v). 
From Proposition~\ref{lem:distpoints},
we infer $\Gamma(\ddot X,\mathcal{O})^{\dot H} = \KK$ 
which implies $\Gamma(\dot X,\mathcal{O}) = \KK$.
The next step is to establish a surjection
$\dot K \to \Cl(\dot X)$,
where 
$\dot K := \ddot E / \dot P^*(\dot M)$
is the character group of $\dot H$.
Consider the push forward $\dot \pi_*$
from the $\dot H$-invariant Weil divisors
on $\ddot X$ to the Weil divisors on $\dot X$
sending $\ddot D$ to $\dot \pi (\ddot D)$.
For every $\dot w \in \dot K$,
we fix a $\dot w$-homogeneous rational
function $f_{\dot w} \in \KK(\dot X)$
and define a map
$$ 
\mu \colon
\dot K \ \to \ \Cl(\dot X),
\qquad\qquad
\dot w \ \mapsto \ [\dot \pi_* \div(f_{\dot w})].
$$
One directly checks that this does not 
depend on the choice of the  $f_{\dot w}$
and thus is a well defined homomorphism.
In order to see that it is surjective, note
that due to Proposition~\ref{lem:distpoints}, 
we obtain 
$\dot C_{ij}$ as $\dot \pi_* \div(T_{ij})$
and 
$\dot C_k$ as $\dot \pi_* \div(T_k)$.
The claim then follows from the 
observation that removing all
$\dot C_{ij}$ and $\dot C_k$ from $\dot X$
leaves the set 
$\dot X \cap \TT^{r+s}$ 
which is isomorphic to $V \times \TT^r$
with a proper open subset $V \subseteq \kappa(Y)$
and hence has trivial divisor class 
group.

Now~\cite[Theorem~1.3]{HaSu} shows that  
the Cox ring of $\dot X$ is 
$R(A,\mathfrak{n},L)[S_1, \ldots, S_m]$
with the $\Cl(\dot X)$-grading given
by $\deg(T_{ij}) = [\dot C_{ij}]$ 
and $\deg(S_k) = [\dot C_k]$.
Consequently, $R(A,\mathfrak{n},L)[S_1, \ldots, S_m]$
is factorially $\Cl(\dot X)$-graded and thus 
also the finer $\dot K$-grading is factorial.
Since $\dot H$ acts freely on 
$\ddot X$, we can conclude
$\Cl(\dot X) = \dot K$.

Finally, we construct a completion of 
$\dot X \subseteq X$ as wanted in~(vi).
Choose any simplicial projective 
fan $\dot \Sigma'$ 
in $\dot N$ having the same rays as 
$\Sigma$, see~\cite[Corollary~3.8]{OdPa}.
The associated toric variety $\dot Z'$ 
is projective and it is the good 
quotient of an open toric subset 
$\ddot Z' \subseteq \b{Z}$ by the 
action of $\dot H$.
The closure $X$ of $\dot X$ in $\dot Z'$ 
is projective and, as the good quotient 
of the normal variety $\b{X} \cap \ddot Z'$,
it is normal.
By Proposition~\ref{prop:divsprime}, 
the complement $X \setminus \dot X$ is 
of codimension at least two, which gives 
$\mathcal{R}(X) = \mathcal{R}(\dot X)$. 
From~\cite[Cor.~4.13]{Ha2} we infer that 
$X$ is $\QQ$-factorial.
\end{proof}

\begin{remark}
We may realize any given $R(A,\mathfrak{n},L)$ 
as a subring of the Cox ring of a surface:
For every $l_i = (l_{i1}, \ldots, l_{in_i})$
choose a tuple $d_i = (d_{i1}, \ldots, d_{in_i})$
of positive integers with
$\gcd(l_{ij},d_{ij}) = 1$
and 
$d_{i1}/l_{i1} < \ldots < d_{in_i}/l_{in_i}$.
Then take
\begin{eqnarray*}
\dot P
& := &
\left( 
\begin{array}{rrr}
P & 0 & 0
\\
d & 1 & -1 
\end{array}
\right).
\end{eqnarray*}
\end{remark}

We are ready to verify the main results.
As mentioned, Theorem~\ref{thm:main2} follows 
from~\cite[Sec.~3]{HaSu}.
Moreover, the statements of Theorem~\ref{thm:main3}
are contained in Proposition~\ref{prop:surfprop}.

\begin{proof}[Proof of Theorem~\ref{thm:main1}]
According to Proposition~\ref{lem:distpoints}, the algebra 
$R(A,\mathfrak{n},L)$ is normal and the $K$-grading 
is effective, pointed and of complexity one.
By Proposition~\ref{prop:surfprop}, the 
algebra $R(A,\mathfrak{n},L)[S_1,\ldots, S_m]$ is 
a Cox ring and hence it is factorially 
graded, 
see~\cite{Ha2}.
Clearly the graded subring $R(A,\mathfrak{n},L)$
inherits the latter property.

Now suppose that $(A,\mathfrak{n},L)$ is 
a sincere triple.
If $K$ is torsion free, then $K$-factoriality
of $R(A,\mathfrak{n},L)$ implies factoriality, 
see~\cite[Theorem~4.2]{Anders}.
Conversely, if $R(A,\mathfrak{n},L)$ is factorial,
then the generators $T_{ij}$ are prime by 
Proposition~\ref{prop:divsprime}.
From~\cite[Lemma~1.5]{HaHeSu}, we then infer that 
the numbers $\gcd(l_{i1}, \ldots, l_{in_i})$
are pairwise coprime. 
This implies that $P \colon F \to N$ is surjective
and thus $K$ is torsion free.
\end{proof}

\begin{proof}[Proof of Theorem~\ref{thm:main4}]
According to~\cite[Theorem~1.3]{HaSu}, the 
Cox ring $\mathcal{R}(X)$ is isomorphic to 
a ring $R(A,\mathfrak{n},L)[S_1,\ldots,S_m]$
with a grading by $\dot K := \Cl(X)$ such 
that the variables $T_{ij}$ and $S_k$ are 
homogeneous.
In particular, $X$ is the quotient 
by the action of $\dot H = \Spec \, \KK[\dot K]$
on an open subset $\rq{X}$ of 
$$
\b{X}  
\ = \ 
V(g_{i,i+1,i+2}; \; 0 \le i \le r-2)
\ \subseteq \
\b{Z}.
$$
For $r < 2$, the variety $X$ is toric,
we may assume that $T$ acts as a subtorus
of the big torus and the assertion follows
by standard toric geometry.
So, let $r \ge 2$.
By construction, the grading of 
$R(A,\mathfrak{n},L)[S_1,\ldots,S_m]$
by $\b{K} := K \times \ZZ^m$ is 
the finest possible such that all 
variables $T_{ij}$ and $S_k$ are homogeneous.
Consequently, we have exact sequences
of abelian groups fitting into a commutative
diagram
\begin{equation}
\label{snake} 
\xymatrix{
&
&
&
0
\ar[d]
&
\\
&
0
\ar[r]
\ar[d]
&
0
\ar[r]
\ar[d]
&
{\t{K}}
\ar[d]
&
\\
0 
\ar[r]
&
M 
\ar[r]^{\ddot P^*}
\ar[d]
&
{\ddot E}
\ar[r]
\ar@{=}[d]
&
{\b{K}}
\ar[r]
\ar[d]
&
0
\\
0 
\ar[r]
&
{\dot M} 
\ar[r]
\ar[d]
&
{\ddot E}
\ar[r]
\ar[d]
&
{\dot K}
\ar[r]
\ar[d]
&
0
\\
&
{\dot M}/M 
\ar[r]
\ar[d]
&
0
\ar[r]
&
0
&
\\
&
0
&
&
&
}
\end{equation}
In particular we extract from this the following 
two commutative triangles, 
where the second one is obtained by dualizing the 
first one
\begin{equation}
\label{snake2}
\xymatrix{
{\ddot E}
\ar@{<-}[rr]
\ar@{<-}[dr]_{\ddot P^*}
& &
{\dot M}
\ar@{<-}[dl]
\\
& 
M
}
\qquad\qquad
\xymatrix{
{\ddot F}
\ar[rr]^{\dot P}
\ar[dr]_{\ddot P}
& &
{\dot N}
\ar[dl]
\\
& 
N
}
\end{equation}

We claim that the kernel $\t{K}$ is free.
Consider $\b{H} := \Spec \, \KK[\b{K}]$ and the 
isotropy group $\b{H}_{ij} \subseteq \b{H}$ 
of a general point $\rq{x}(i,j) \in \rq{X} \cap V(T_{ij})$. 
Then we have exact sequences
$$
\xymatrix{
1
\ar@{<-}[r]
&
{\b{H} / \b{H}_{ij}} 
\ar@{<-}[r]
&
{\b{H}}
\ar@{<-}[r]
&
{\b{H}}_{ij}
\ar@{<-}[r]
&
1
\\
0 
\ar[r]
&
{\b{K}}(i,j) 
\ar[r]
&
{\b{K}}
\ar[r]
&
{\b{K}}/{\b{K}}(i,j)
\ar[r]
&
0
}
$$
where the second one arises from the first one by 
passing to the character groups. Note that the 
subgroup ${\b{K}}(i,j) \subseteq {\b{K}}$ is given by 
\begin{eqnarray}
\label{eqn:bKij}
{\b{K}}(i,j) 
& = &
\lin_\ZZ(\deg T_{kl}; \; (k,l) \ne (i,j))
+
\lin_\ZZ(\deg T_{p}; \; 1 \le p \le m).
\end{eqnarray}
Now~\cite[Theorem~1.3]{HaSu} tells us that
each variable $T_{ij}$ defines a $\dot K$-prime 
element in $\mathcal{R}(X)$ and thus
its divisor is $\dot H$-prime.
Consequently, $\b{H} / \dot H\b{H}_{ij}$
is connected and has a free 
character group 
$$
\Chi(\b{H} / \dot H \b{H}_{ij})
\ = \ 
\t{K}(i,j) 
\ := \ 
\t{K} \cap {\b{K}}(i,j).
$$
Mimicking equation~(\ref{eqn:bKij}), we 
define a subgroup ${\dot{K}}(i,j) \subseteq {\dot{K}}$
fitting into a commutative net of exact sequences
\begin{equation}
\label{eqn:net}
\xymatrix{
&
0 
\ar[d]
&
0 
\ar[d]
&
0 
\ar[d]
&
\\
0 
\ar[r]
&
{\t{K}}(i,j) 
\ar[r]
\ar[d]
&
{\t{K}}
\ar[r]
\ar[d]
&
{\t{K}}/{\t{K}}(i,j)
\ar[r]
\ar[d]
&
0
\\
0 
\ar[r]
&
{\b{K}}(i,j) 
\ar[r]
\ar[d]
&
{\b{K}}
\ar[r]
\ar[d]
&
{\b{K}}/{\b{K}}(i,j)
\ar[r]
\ar[d]
&
0
\\
0 
\ar[r]
&
{\dot{K}}(i,j) 
\ar[r]
\ar[d]
&
{\dot{K}}
\ar[r]
\ar[d]
&
{\dot{K}}/{\dot{K}}(i,j)
\ar[r]
\ar[d]
&
0
\\
&
0
&
0
&
0
&
}
\end{equation}
By general properties of Cox rings~\cite[Prop.~2.2]{Ha2}, 
we must have ${\dot{K}}/{\dot{K}}(i,j)=0$ and 
thus we can conclude 
\begin{equation}
\label{eqn:netcor}
{\t{K}}/{\t{K}}(i,j) 
\ = \ {\b{K}}/{\b{K}}(i,j) 
\ \cong \
\ZZ/l_{ij}\ZZ.
\end{equation}
Consider again
$\rq{x}(i,j) \in V(T_{ij}) \cap \rq{X}$,
set $x(i,j) = p_X(\rq{x}(i,j))$ and let 
$T$ denote the torus acting on $X$.
Then~\cite[Prop.~2.6]{HaSu} and its proof 
provide us with a commutative diagram
$$ 
\xymatrix{
{\t{H}}
\ar@{<-}[d]
\ar@{}[r]|\supseteq
&
{\t{H}_{ij}}
\ar@{<-}[d]^{\cong}
\\
T
\ar@{}[r]|\supseteq
&
T_{x(i,j)}
}
$$
where $\t{H} = \b{H}/ \dot H$ and 
$\t{H}_{ij} = \b{H} / \dot H\b{H}_{ij}$.
Using~(\ref{eqn:netcor}) and 
passing to the character groups 
we arrive at a commutative diagram
$$
\xymatrix{ 
0 
\ar[r]
&
{\t{K}}(i,j) 
\ar[r]
&
{\t{K}}
\ar[r]
\ar[d]
&
{\ZZ/l_{ij}\ZZ}
\ar[r]
\ar@{<-}[d]^{\cong}
&
0
\\
&
&
{\Chi(T)}
\ar[r]
&
{\ZZ/l_{ij}\ZZ}
\ar[r]
&
0
}
$$
with exact rows.
As seen before, the group 
$\t{K}(i,j)$ is free abelian.
Consequently, also $\t{K}$ must
be free abelian.

Now the snake lemma tells us that $\dot M /M$ is 
free as well.
In particular, the first vertical sequence 
of~(\ref{snake}) splits.
Thus, we obtain the desired matrix presentation of 
$\dot P$ from rewriting the second commutative triangle 
of~(\ref{snake2}) as
$$
\xymatrix{
{\ddot F}
\ar[rr]^{\dot P}
\ar[dr]_{\ddot P}
& &
N \oplus {\dot N} / M^\perp
\ar[dl]
\\
& 
N
}
$$
\end{proof}

\end{document}